\newcommand{\removelatexerror}{\let\@latex@error\@gobble}
\newcommand{\calH}{{\mathcal{H}}}
\newtheorem{theorem}{Theorem}
\newtheorem{lemma}{Lemma}
\newtheorem{corollary}{Corollary}
\begin{document}
\title{How many matchings cover the nodes of a graph?}
\author{Dehia Ait Ferhat\thanks{Mentor Graphics, Montbonnot Saint Martin,
    France},  Zolt\'an Kir\'aly\thanks{E\"otv\"os University, Budapest,
    Department of Computer Science, partially supported by a grant (no.\ K
    109240) from the National Development Agency
    of Hungary, based on a source from the Research and Technology Innovation
    Fund}, Andr\'as Seb\H{o}\thanks{CNRS,  G-SCOP, Univ.~Grenoble Alpes, partially supported  by IDEX-IRS SACRE},
	Gautier Stauffer\thanks{Center of Excellence in Supply Chain Innovation and Transportation, Kedge Business School, Talence, supported by Mentor Graphics}}

\maketitle 

\begin{abstract}
Given an undirected graph, are there $k$ matchings whose union covers all of
its nodes, that is, a {\em matching-$k$-cover}? A first, easy polynomial
solution from matroid union is possible, as already observed by Wang, Song and
Yuan (Mathematical Programming, 2014). 
However, it was not satisfactory neither from the algorithmic viewpoint nor
for proving graphic theorems, since the corresponding matroid ignores the
edges of the graph.  

We prove here, simply and algorithmically:  {\em all nodes of a graph can be
  covered with $k\ge 2$ matchings if and only if for every stable set $S$ we
  have $|S|\le k\cdot|N(S)|$.} When $k=1$, an exception occurs: this condition
is not enough to guarantee the existence of a matching-$1$-cover, that is, the
existence of a perfect matching, in this case Tutte's famous matching theorem
(J. London Math. Soc., 1947) provides the right `good' characterization. The
condition above then guarantees only that a perfect $2$-matching exists, as
known from another theorem of Tutte (Proc. Amer. Math. Soc., 1953).   

Some results are then deduced as consequences with surprisingly simple proofs,
using only the level of difficulty of bipartite matchings. We give some
generalizations, as well as a solution for minimization if the edge-weights
are non-negative, while the edge-cardinality maximization of
matching-$2$-covers turns out to be already NP-hard.  
       
We have arrived at this problem as the line graph special case of a model
arising for manufacturing integrated circuits with the technology called
`Directed Self Assembly'.
   \end{abstract}

{\bf Keywords} matching, packing, covering, factors

\section{Introduction}\label{sec:introduction}

In this paper, we consider simple graphs, that is, graphs with no loops or parallel edges. We start with a few definitions and notations. 

We denote by $G=(V,E)$ a graph with {\em node set} $V$ and {\em edge set} $E$.  When no confusion may arise, we denote by $n$ and $m$ the number of nodes in $V$ and the number of edges in $E$ respectively.  

Given a graph $G=(V,E)$, an edge set $F\subseteq E$ {\em covers} a node $v$ if
there is an edge of  $F$ incident to $v$.  A family of edge sets {\em covers}
the nodes covered by its union.  A {\em matching} is a pairwise node-disjoint
subset of edges; a matching is {\em perfect} if it covers $V.$  A {\em
  $2$-matching} is a collection of node-disjoint edges and circuits;  a
2-matching is {\em perfect} if it covers $V.$ A {\em matching-$k$-cover}
$(k\in\mathbb{N})$ is a set of $k$ matchings that cover $V.$ The union of (the
edges of) these matchings will also be called a matching-$k$-cover. The {\em
  cardinality} or {\em weight} (in edge-weighted graphs) of a
matching-$k$-cover is the cardinality or weight of its edge-set.  We denote by $\nu(G)$ the {\em matching number}, that is, the size of a maximum matching of the graph $G$.  

Given $U \subseteq V,$ we denote by $N(U)$ the set of neighbors of nodes in
$U$, that is, $N(U):=\{v\in V \setminus U: \hbox{ there exists } u\in U, uv\in
E \}$. We  let $\delta(U)$ denote the set of edges with exactly one extremity
in $U$; $d(v)=|\delta(\{v\})|$ is the {\em degree} of node $v\in V$;
$\Delta:=\max_{v\in V}d(v)$ is the maximum degree. For $U\subseteq V$, $G[U]$
denotes the {\em graph induced by $U$}, with node set $U$, and all edges of
$G$ with both end nodes in $U$.  A {\em path} is a connected subgraph with all
degrees at most two, and with at least one node of degree less than two.
The nodes of degree one, if any, are its {\em end nodes}.  A  {\em circuit} is a connected subgraph with all degrees exactly two.

A {\em $k$-star graph}, consisting of a stable set $S$ of size $k$ and an
additional node joined to all nodes of $S$, shows that the minimum number of
matchings that cover all nodes can be as large as the maximum degree. A na\"\i
ve answer to the title question could then be nourished by Vizing's theorem
\cite{Vizing64} stating that {\em all edges} of a graph can be covered by at
most $\Delta+1$ matchings.

Actually, K\H{o}nig's ``edge-coloring theorem'' \cite{Konig,KonigNemetAngol}
already implies
that $\Delta$ matchings are always enough to cover all nodes at least once
(since a spanning tree is a bipartite graph). This is best possible in
terms of the maximum degree.  However, obviously there
are graphs for which a smaller number of matchings than the maximum degree is
sufficient (eg.\ $K_4$, the complete graph on four vertices).

Now, for any stable set $S$ (if $N(S)\ne\emptyset$), $\frac{|S|}{|N(S)|}$ is a lower bound on the number of matchings needed to cover the vertex set as shown by Lemma \ref{lem:necessary}.

\begin{lemma}\label{lem:necessary}
  If $G$ is a connected graph with $n>1$ vertices and $S\subseteq V$ is a non-empty stable set, then at least $\frac{|S|}{|N(S)|}$ matchings are needed to cover the vertex set.
\end{lemma}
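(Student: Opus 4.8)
The plan is to argue by a direct double-counting of the edges incident to $S$ inside each matching. First I would observe that the quantity $\frac{|S|}{|N(S)|}$ is well defined: since $G$ is connected and $n>1$, no vertex is isolated; as $S$ is stable and non-empty it cannot equal $V$ (that would force an edge inside $S$), so some vertex of $S$ has a neighbour outside $S$, giving $N(S)\ne\emptyset$.

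Next, suppose $M_1,\dots,M_k$ are matchings whose union covers $V$, and fix one of them, say $M_i$. Consider the edges of $M_i$ having exactly one endpoint in $S$; call this set $F_i$. Because $S$ is stable, every edge of $M_i$ that covers a vertex of $S$ lies in $F_i$, and its other endpoint lies in $N(S)$. Since $M_i$ is a matching, the $N(S)$-endpoints of the edges in $F_i$ are pairwise distinct, so $|F_i|\le |N(S)|$. Consequently $M_i$ covers at most $|N(S)|$ vertices of $S$.

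Finally, since $M_1\cup\cdots\cup M_k$ covers all of $S$, every vertex of $S$ is covered by at least one $M_i$, so
\[
|S|\;\le\;\sum_{i=1}^{k}\bigl|\{v\in S:\ v\text{ is covered by }M_i\}\bigr|\;\le\;\sum_{i=1}^{k}|N(S)|\;=\;k\,|N(S)|,
\]
which rearranges to $k\ge \frac{|S|}{|N(S)|}$. As $k$ was the number of matchings in an arbitrary cover, at least $\frac{|S|}{|N(S)|}$ matchings are needed. There is no real obstacle here; the only point requiring a word of care is the non-emptiness of $N(S)$, which is exactly where the hypotheses ``connected'' and ``$n>1$'' are used.
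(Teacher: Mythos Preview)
Your proof is correct and follows exactly the paper's approach: the paper's one-line argument is that a single matching can cover at most $|N(S)|$ nodes of the stable set $S$, hence $|S|\le k\,|N(S)|$, and you have simply spelled this out in full detail (including the well-definedness of the ratio, which the paper leaves implicit).
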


\begin{proof}
  If a matching-$k$-cover exists, then   $\frac{|S|}{|N(S)|}\le k$, because one
  matching can cover at most $|N(S)|$ nodes of the stable set $S$.
\end{proof}

 A key result of the paper is that {the maximum of these ratios is equal to the minimum number of matchings in a cover, provided that the maximum is strictly greater than one}: 
 
\begin{restatable}{corollary}{maincorollary}
\label{cor:main}
  The minimum number of matchings that  cover all nodes of a graph is equal to the maximum of $\left\lceil \frac{|S|}{|N(S)|}\right\rceil$ over non-empty stable sets of the graph, unless this maximum is  $1$, in which case it is 1 or 2 according to whether  a perfect matching exists or not.
\end{restatable}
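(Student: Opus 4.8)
The plan is to obtain Corollary~\ref{cor:main} from the qualitative characterization announced in the abstract, which I will call Theorem~$(\star)$: \emph{for every integer $k\ge 2$, a graph $G$ without isolated vertices admits a matching-$k$-cover if and only if $|S|\le k\,|N(S)|$ for every stable set $S$.} Granting Theorem~$(\star)$, the corollary is a short deduction, so the substance lies in proving $(\star)$. My route to $(\star)$ is in two steps. \emph{Step A} (valid for every $k\ge 1$): a matching-$k$-cover exists if and only if $G$ has a spanning subgraph $H$ with all degrees in $\{1,\dots,k\}$ -- a ``$[1,k]$-factor''. Indeed, the union of $k$ matchings covering $V$ has maximum degree $\le k$ and minimum degree $\ge 1$; conversely, given such an $H$, take a maximal acyclic subgraph $F\subseteq H$, i.e.\ a spanning tree of each component of $H$: every vertex still has $F$-degree $\ge 1$ since its $H$-component has at least two vertices, and $\Delta(F)\le\Delta(H)\le k$; as $F$ is a forest it is bipartite, so $\chi'(F)=\Delta(F)\le k$ by K\H{o}nig's edge-colouring theorem, and its $\le k$ colour classes are matchings covering $V$. \emph{Step B} (this is where $k\ge 2$ enters): $G$ has a $[1,k]$-factor if and only if $|S|\le k\,|N(S)|$ for every stable set $S$.

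For Step B the ``only if'' direction is the double counting of Lemma~\ref{lem:necessary}: in a $[1,k]$-factor $H$, every vertex of a stable set $S$ has an $H$-edge to $N(S)$, so the number of $H$-edges between $S$ and $N(S)$ is at least $|S|$ and at most $\sum_{v\in N(S)}d_H(v)\le k\,|N(S)|$. The ``if'' direction I would prove by reducing the existence of a $[1,k]$-factor to a bipartite matching/transportation feasibility question -- equivalently, choosing a ``centre/leaf'' labelling of $V$ together with a parent in $N(\cdot)$ for each leaf so that each centre receives between $1$ and $k$ leaves (for $k\ge 2$ this reproduces any $[1,k]$-factor, since a longest-path induction shows every tree with $\Delta\le k$ decomposes into vertex-disjoint stars each having between $1$ and $k$ edges) -- and then invoking a defect/deficiency form of K\H{o}nig's theorem. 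The \emph{main obstacle} is turning the deficiency certificate of this auxiliary bipartite problem back into a genuine \emph{stable} set $S$ with $|S|>k\,|N(S)|$: one has to handle the coupling between the two roles a vertex can play and exploit that edges inside the tight set only help (the usual ``push into the neighbourhood'' manoeuvre), and keep the argument constructive for the promised minimisation algorithm. (Quoting the Berge--Las Vergnas theorem on subgraphs with degree constraints would make Step B immediate, at the price of the self-contained bipartite-level proof the abstract advertises.)

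Granting Theorem~$(\star)$, here is the deduction of Corollary~\ref{cor:main}. We may assume $G$ has no isolated vertex (otherwise no matching-$k$-cover exists for any $k$); then every non-empty stable set has a non-empty neighbourhood. Write $\mu$ for the minimum number of matchings covering $V$ and $M:=\max\lceil |S|/|N(S)|\rceil$ over non-empty stable sets $S$. By Lemma~\ref{lem:necessary}, $\mu\ge |S|/|N(S)|$ for each such $S$, and $\mu$ being an integer, $\mu\ge M$. If $M\ge 2$, then $|S|\le M\,|N(S)|$ for every stable $S$ (as $M\ge\lceil|S|/|N(S)|\rceil$), so Theorem~$(\star)$ with $k=M$ yields a matching-$M$-cover and $\mu\le M$; hence $\mu=M$. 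If $M=1$, then $|S|\le|N(S)|$ for every stable $S$: if $G$ has a perfect matching then $\mu=1$; if not then $\mu\ge 2$, while $|S|\le|N(S)|\le 2|N(S)|$ lets Theorem~$(\star)$ with $k=2$ produce a matching-$2$-cover, so $\mu=2$. This is exactly Corollary~\ref{cor:main}; the only genuinely exceptional point is $M=1$, where the Hall-type condition on stable sets is strictly weaker than Tutte's condition for a perfect matching -- it only forces a $[1,2]$-factor, that is, a perfect $2$-matching -- which is what makes the value there ``$1$ or $2$''.
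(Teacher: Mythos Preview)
Your deduction of Corollary~\ref{cor:main} from the characterization Theorem~$(\star)$ is correct and is precisely what the paper means by ``readily deduce'': the paper does not even spell it out. Your Step~A---matching-$k$-covers coincide with $[1,k]$-factors because a spanning forest of a $[1,k]$-factor is still a $[1,k]$-factor and, being bipartite, is $k$-edge-colourable by K\H{o}nig---is a clean variant of the paper's Claim~1, which instead prunes any $F$ with $d_F\le k$ down to a star forest by deleting edges whose two endpoints both have $F$-degree $\ge 2$. Both routes are valid and equally elementary.

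The substantive divergence is Step~B, and here your proposal is genuinely different from the paper \emph{and} incomplete in the way you yourself flag. The paper does not reduce to an auxiliary bipartite problem at all. It proves $(\star)$ directly in $G$ by an extremal/alternating-path argument: take an inclusionwise-minimal union $F$ of $k$ matchings covering a maximum number of nodes, let $B'$ be the uncovered nodes, and grow alternating paths from $B'$ (edges alternating between $E\setminus F$ and $F$). The set $S$ of reached ``degree-$\le 1$'' vertices is shown to be stable, its neighbourhood $A'=N(S)$ consists entirely of vertices with $d_F=k$, and counting gives $|S\setminus B'|=k|A'|$, hence $|S|>k|N(S)|$. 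This is self-contained at the level of bipartite matching, is constructive (it is the algorithm), and produces the violating stable set \emph{in $G$} directly---exactly the step you identify as the obstacle in your bipartite-reduction plan.

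Your proposed route through a centre/leaf bipartite model plus a K\H{o}nig deficiency form is viable in principle (it is essentially the Berge--Las~Vergnas and Heinrich--Hell--Kirkpatrick--Liu route, which the paper later discusses as a \emph{connection} rather than as its proof), but the obstacle you name is real: the na\"\i ve bipartite certificate gives a set $T\subseteq V$ with $|T|>k\cdot|\{u:u\text{ adjacent to }T\}|$, and collapsing this to a \emph{stable} $S$ with $|S|>k|N(S)|$ requires exactly the kind of argument the paper's alternating-path proof supplies for free. So your sketch stops short of a proof of $(\star)$; the paper's direct argument is both shorter and avoids the detour.
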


For an example, on the $k$-star graph the maximum is $k$ and indeed, the minimum number of matchings covering $V$ is also $k$.\\

The condition of this theorem  in the exceptional case $k=1$ reveals a remarkable connection and clarifies the exception. The condition coincides then with the following necessary and sufficient condition of Tutte for the existence of $2$-matchings:  

\begin{theorem}[Tutte, 1953, \cite{Tutte53}]\label{thm:2m}
	Let $G=(V,E)$ be a graph. There exists a perfect $2$-matching in $G$ if and only if  $|S|\le |N(S)|$ for each stable set $S$.
\end{theorem}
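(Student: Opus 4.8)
The plan is to prove both implications by reduction to \emph{bipartite} matchings, which is exactly the level of difficulty advertised in the paper. The tool is the bipartite ``double'' $H$ of $G$: the two colour classes of $H$ are copies $V^-$ and $V^+$ of $V$, each of size $n=|V|$, and $v^-u^+$ is an edge of $H$ precisely when $vu\in E$; since $G$ is loopless, $H$ has no edge $v^-v^+$. First I would show that $G$ has a perfect $2$-matching if and only if $H$ has a perfect matching. Indeed, a perfect matching of $H$ is the same thing as a bijection $\sigma\colon V\to V$ with $v\sigma(v)\in E$ for every $v$ (match $v^-$ to $\sigma(v)^+$; bijectivity of $\sigma$ makes this saturate $V^+$ as well), and the absence of loops forces $\sigma$ to be fixed-point-free. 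Decomposing $\sigma$ into cycles, a $2$-cycle $(u\,v)$ is read as the single edge $uv$ and a cycle $v_1v_2\cdots v_k$ with $k\ge 3$ as the circuit $v_1v_2\cdots v_kv_1$ of $G$; since the cycles of $\sigma$ partition $V$, this is precisely a perfect $2$-matching, and conversely orienting each circuit of a perfect $2$-matching (and putting $\sigma(u)=v,\ \sigma(v)=u$ on each isolated edge $uv$) recovers such a $\sigma$. So the theorem reduces to: $H$ has a perfect matching if and only if $|S|\le|N(S)|$ for every stable set $S$ of $G$.

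For necessity, suppose $H$ has a perfect matching. By Hall's theorem, $|N_H(X)|\ge|X|$ for every $X\subseteq V^-$. Given a stable set $S$, apply this to $X=\{s^-:s\in S\}$: because $S$ is stable, the vertices of $G$ having a neighbour in $S$ are exactly those of $N(S)$, so $N_H(X)=\{u^+:u\in N(S)\}$ and hence $|N(S)|\ge|S|$. (Alternatively, and without $H$: reading $1$ on the isolated edges and $\tfrac12$ on the circuit edges of a perfect $2$-matching gives a vector $x\ge 0$ with $x(\delta(v))=1$ for all $v$, and summing over $v\in S$ yields $|S|=\sum_{e\in\delta(S)}x_e\le\sum_{u\in N(S)}x(\delta(u))=|N(S)|$.)

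For sufficiency, assume $|S|\le|N(S)|$ for every stable set $S$; by the reduction it suffices to verify Hall's condition for $H$. Let $X\subseteq V^-$ and let $T\subseteq V$ be the corresponding set, so that $|N_H(X)|=|N^+(T)|$, where $N^+(T):=\{u\in V:u\text{ has a neighbour in }T\}$ (this set, unlike $N(T)$, may intersect $T$). Split $T=T_0\cup T_1$, where $T_0$ consists of the vertices of $T$ with no neighbour inside $T$. Then $T_0$ is a stable set; $N^+(T)$ is the \emph{disjoint} union of $T_1$ and $N(T)$; and every neighbour of a vertex of $T_0$ lies outside $T$, so $N(T_0)\subseteq N(T)$. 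Combining, $|N^+(T)|=|T_1|+|N(T)|\ge|T_1|+|N(T_0)|\ge|T_1|+|T_0|=|T|$, which is Hall's condition; this would settle sufficiency and complete the proof.

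The step I expect to carry the actual content is the reduction to $H$: one must check that the correspondence between perfect matchings of the double cover and perfect $2$-matchings of $G$ is exact, and in particular that the $2$-cycles of $\sigma$ are legitimately interpreted as single edges — this is exactly where the definition ``disjoint edges \emph{and} circuits'' is used, rather than the more rigid notion of a $2$-factor, for which the same reduction would need extra gadgets. The sufficiency step may at first glance look as though it needed Gallai--Edmonds-type structure, but the one observation $N(T_0)\subseteq N(T)$ reduces an arbitrary set $T$ to its stable part $T_0$ for free, so there is no real obstacle there.
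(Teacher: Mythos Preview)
The paper does not actually prove Theorem~\ref{thm:2m}: it is quoted as Tutte's 1953 result and used as background, with no argument given. So there is nothing to compare against.

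That said, your proof is correct and is the classical one. The bipartite double $H$ does exactly what you claim: perfect matchings of $H$ correspond to fixed-point-free permutations $\sigma$ of $V$ with $v\sigma(v)\in E$, and the cycle decomposition of such a $\sigma$ is precisely a perfect $2$-matching in the paper's sense (node-disjoint edges and circuits). Your verification of Hall's condition is clean; the only nontrivial point is passing from an arbitrary $T\subseteq V$ to its stable part $T_0$, and your observation that every neighbour of $T_0$ lies outside $T$ (hence $N(T_0)\subseteq N(T)$, using the paper's convention $N(U)\subseteq V\setminus U$) handles this correctly. The alternative fractional argument for necessity is also fine, since stability of $S$ makes $\sum_{v\in S}x(\delta(v))=x(\delta(S))$ and every edge of $\delta(S)$ has its non-$S$ endpoint in $N(S)$.

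Your approach is entirely in the spirit of the paper---``only the level of difficulty of bipartite matchings''---and indeed the alternating-path proof of Theorem~\ref{thm:main} can be read as running the same Hall/augmenting-path machinery on an analogous auxiliary bipartite structure.
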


\newpage
\subsection*{Summary of our contributions and organisation of the paper}

The minimum value $k$ where a matching-$k$-cover exists was calculated by Wang, Song and Yuan in time $O(nm)$. This algorithm was published in  \cite{erratum}
where the authors corrected their previous paper \cite{matching_cover_china}. The authors also observed that the problem can be solved using matroid union (of matching matroids) and hence the node-weighted version of the problem can also be solved in polynomial time (note that our new minmax result (Corollary \ref{cor:main}) does not follow from the minmax theorem from matroid union). The algorithm of Wang, Song and Yuan \cite{erratum} builds upon Edmonds Gallai theorem for non bipartite matching plus some augmenting path techniques and some additional technical arguments. 

In this paper, we show that the problem is actually much simpler than what it looks like at first sight by identifying connections with other well-known results from the area. We use this to develop a simple augmenting path algorithm (and a short proof) in the spirit of bipartite matching (no need for Edmonds- Gallai decomposition theorem). Besides giving a simplified algorithmic result, we also develop a good characterization (that is, the minmax result described in Corollary \ref{cor:main}, again in the spirit of bipartite matching). Then we extend the result to deal with the weighted case and some other extensions and make clear connections to other well-known problems in the area on {\it star-packings}  and {\it (1,k)-factors} (definitions in Section \ref{sec:conn})  by Tutte \cite{Tutte54}, Berge and Las Vergnas \cite{BergeLasvergnas}, Vizing \cite{Vizing64} Hell and Kirkpatrick  \cite{HellKirkpatrickAlg} (simplifying some of the corresponding proofs at the same time, and establishing the exact relations between these independent results that have been ignoring one another). All in all, the understanding of this fundamental problem is substantially simplified and extended.

In Section~\ref{sec:ex} we state and prove the key result of the paper in a simple form, providing a ``good characterization theorem'' for the existence of matching-$k$-covers. 
In Section~\ref{sec:conn} we clarify connections with two other
combinatorial objects, connecting matching-$k$-covers to previously studied
notions, thereby enriching  the available tools. As an immediate consequence the
polynomial decidability of the existence, and {\em solvability of  weighted
  versions} (for non-negative weights) is deduced.      In Section~\ref{sec:variants} the results and tools are used to widen the context of our work. Finally, in Section~\ref{sec:app} we describe the practical problem that has been the source of our study.

\section{Existence of a matching-$k$-cover}\label{sec:ex} 

In this Section we prove the key existence result concerning matching-$k$-covers, that is,  Theorem \ref{thm:main}. At the technical level, this is almost all we need. The rest of the paper consists in realizing connections and consequences, then considering weighted versions and generalizations.

We keep in this section the somewhat na\"\i ve state of ignoring the connections that we are discussing (and have realized) later. These connections are discussed in the next section. The goal here is to show  a self-contained first proof for matching-$k$-covers without these connections. 

\begin{theorem}
\label{thm:main}
 Let $G=(V,E)$ be a connected graph, and $k\geq 2$ be an integer. There exists a matching-$k$-cover in $G$ if and only if  $|S|\leq k\,|N(S)|$ for each stable set $S$ of $G$.
\end{theorem}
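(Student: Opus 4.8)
\emph{The ``only if'' direction} is Lemma~\ref{lem:necessary}: in one matching each vertex of a stable set $S$ is matched to a distinct vertex of $N(S)$, so a matching covers at most $|N(S)|$ vertices of $S$, whence $k$ matchings force $|S|\le k|N(S)|$.

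\emph{For ``if''}, I would first reduce to a cleaner object. Call a spanning subgraph of $G$ a \emph{$k$-star forest} if each of its components is a star with between $1$ and $k$ edges. Such a forest \emph{is} a matching-$k$-cover: colour the edges of each star with distinct colours from $\{1,\dots,k\}$; since the stars are vertex-disjoint every colour class is a matching, and the $k$ of them cover $V$. Conversely, from a matching-$k$-cover $H$ (the union of its $k$ matchings, so $\Delta(H)\le k$) take a $k$-star forest $F\subseteq H$ covering the maximum number of vertices; if some $r$ stayed uncovered, then, using that $H$ covers $r$ and $\Delta(H)\le k$, a short case analysis on the status in $F$ of a neighbour $u$ of $r$ --- $u$ uncovered, $u$ a leaf of a one-edge star (reroot it as a two-edge star through $u$), $u$ a leaf of a larger star (detach it), $u$ a non-full centre (attach $r$), the remaining case of $u$ a full centre being excluded by $\deg_H(u)\le k$ --- enlarges $F$, a contradiction. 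So it suffices to build a $k$-star forest of $G$. The same local exchanges show this is equivalent to $G$ having a spanning subgraph with all degrees in $\{1,\dots,k\}$; the slack that a star may carry up to $k\ge2$ edges --- so that, e.g., a maximal matching already leaves only a stable set uncovered --- is precisely what is absent when $k=1$, where one genuinely needs Tutte's theorem rather than a Hall-type condition.

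\emph{Building the $k$-star forest} from the hypothesis is the heart of the matter, and I would do it algorithmically, essentially as a bipartite augmenting-path/K\H{o}nig argument. Start from a maximal matching --- a disjoint union of one-edge stars whose uncovered set $R$ is stable --- and while $R\neq\emptyset$ take $r\in R$ and search along alternating walks in the current union of stars: if a neighbour of $r$ is uncovered, is a leaf, or is a non-full centre, a local move covers $r$ at once; otherwise every neighbour of $r$ is a full ($k$-edge) centre, and I reroute one of its leaves to an adjacent non-full centre, freeing a slot one step back, and iterate. Each successful walk strictly increases the number of covered vertices, so the procedure terminates with $R=\emptyset$, i.e.\ with a spanning $k$-star forest; the point is that if it ever got stuck then the uncovered vertices together with the leaves reached along the search --- each reached centre being full and so carrying exactly $k$ leaves --- would, after passing to an appropriate independent subfamily, form a stable set $S$ with $|S|>k|N(S)|$, against the hypothesis. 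The main obstacle I anticipate is exactly this last bookkeeping: choosing the quantity to keep monotone so that the reachable region is genuinely closed under taking neighbourhoods on its ``centre side'', and getting the factor $k$ in $|S|>k|N(S)|$ to come out on the nose. Equivalently one can package the search as a maximum matching in an auxiliary bipartite graph --- the vertices of $G$ on one side, $k$ ``centre-slots'' per vertex on the other --- and invoke K\H{o}nig/Hall; then the delicate step is the translation showing that a minimum vertex cover (a Hall-deficient set) pulls back to a stable set of $G$ whose neighbourhood is exactly $k$ times too small, the non-matching-like lower-bound constraint ``every centre used must receive a leaf'' being exactly what the rerouting moves dispose of.
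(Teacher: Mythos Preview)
Your outline matches the paper's proof almost step for step: reduce to spanning $k$-star forests (the paper's Claim~1), take a maximum partial one $F$ (chosen inclusionwise minimal), and run an alternating-path search from the uncovered set. The ``bookkeeping'' you flag as the main obstacle is the entire content of the argument, and you have not carried it out; the paper does, and it is cleaner than you fear---in particular no ``passing to an appropriate independent subfamily'' is needed. With $B=\{v:d_F(v)\le 1\}$, $A=V\setminus B$, $B'\subseteq B$ the uncovered vertices, and $S\subseteq B$, $A'\subseteq A$ the vertices reachable from $B'$ along paths alternating between $E\setminus F$ and $F$, one verifies three short claims: (i) every $a\in A'$ has $d_F(a)=k$, else augment; (ii) $S$ is \emph{already} a stable set, because if some $s\in S$ had a neighbour $b\in B$ then the alternating path reaching $s$ uses its unique $F$-edge (or $s\in B'$), so $sb\notin F$, and since $d_F(b)\le 1<k$ one can augment along the extended path---this is exactly where $k\ge 2$ is used; (iii) $N(S)=A'$, since an $E\setminus F$-edge from $S$ into $A\setminus A'$ would extend reachability and the lone $F$-edge at any $s\in S\setminus B'$ goes back into $A'$. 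From (i)--(iii) the $F$-edges give a bijection between $S\setminus B'$ and pairs (vertex of $A'$, one of its $k$ $F$-edges), so $|S\setminus B'|=k|A'|=k|N(S)|$ and $|S|>k|N(S)|$ comes out on the nose.

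Your alternative bipartite packaging with ``$k$ centre-slots per vertex'' is not correct as described: nothing prevents a vertex from being matched as a leaf on one side while some of its slots are used on the other, which does not correspond to a star forest. The paper's later Section~3 makes the right connection, namely to $(1,k)$-factors (subgraphs with all degrees in $\{1,\dots,k\}$), whose inclusionwise minimal instances are exactly your $k$-star forests; those admit standard degree-constrained-subgraph reductions.
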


\begin{proof} We have already checked the only if part in Lemma \ref{lem:necessary}. 

\medskip	
\noindent{\bf Claim 1:} If $F\subseteq E$ and $d_{F}(v)\le k$ for every $v\in V,$ then $F$ contains $k$ matchings that cover the same nodes as $F$.   
	
\smallskip

While there is an edge  $uv\in F$ such that both $u$ and $v$ are  covered by at least two edges of $F$, delete $uv$ from $F$.  
At the end of the procedure, the connected components of $F$ are stars of
degree at most $k$. We can thus build $k$ matchings covering the same node set as $F$ by assigning the edges in each star to a different matching.  
	
\smallskip
	 
In order to prove the if part of the theorem, let $F$ be a union of $k$
matchings that covers a maximum number of nodes. We choose $F$ to be
inclusionwise minimal among these maximum choices. 
	
Let  $B$  be the set of nodes incident to at most one edge of $F$, let $A:= V \setminus B$ be the rest of the nodes of $G$, and let $B'\subseteq B$ be the nodes of $V$ not covered by  $F$. By the minimality of $F$, and by Claim 1, no edge of $F$ connects two nodes of $A$. We show that $B'\ne \emptyset$ implies that there exists a stable set $S$ such that $|S|>k\cdot |N(S)|$. Let us assume that $B'\neq \emptyset$.
	
	Let $A'$  be the set of nodes of $A$ reachable from  $B'$ by an {\em alternating path}, that is, a path with nodes alternating between $B$ and $A$, and edges alternating between  $E\setminus F$ and $F$. Let  
	$S\subseteq B$ the set of nodes of $B$ reachable from $B'$ by an alternating path, clearly $S\supseteq B'$.

	\medskip\noindent
	\noindent{\bf Claim 2:}
	{\em The nodes in $A'$ are incident to exactly  $k$ edges of $F$.} 
	
	\smallskip
	Indeed, suppose there is a node in $A'$ incident to less than $k$ edges of $F$. This node is reachable from $B'$ by an alternating path by construction. Interchanging edges of $F$ and $E\setminus F$ along this alternating path yields a set $F'$ with $d_{F'}\leq k$ covering more nodes of $G$ (because a node of $B'$ that was not covered is now covered), and hence, by Claim~1, there exists a set of $k$ matchings that cover more nodes than $F$, a contradiction.    
	
	\medskip\noindent
	\noindent{\bf Claim 3:} {\em $S$ is a stable set.}
	
	\smallskip
	 We prove that there is no edge from $S$ to any node of $B$,
         which implies in particular that $S$ is a stable set and that $N(S)
         \subseteq A$. Indeed, assume to the contrary that there is an edge
         $(s,b)$ for some $s\in S$ and $b\in B$. By definition, there is an
         alternating path $P$ from some $b'\in B'$ to $s$, and since the last
         edge of $P$ is in $F$ (or $s\in B', \; P=\{s\}$), $(s,b)\notin F.$
         But then $P$ could be extended with $(s,b)$ to an alternating path
         (not completely alternating between $A$ and $B$, since its last two
         nodes, $s$ and $b$ are both in $B$, which is not disturbing) from
         $b'$ to $b$. Interchanging edges of $F$ and $E\setminus F$ along this
         alternating path yields a set $F'$ with $d_{F'}\leq k$ (because
         $d_F(b)\le 1<k$), covering more nodes of $G$, and hence, by Claim~1,
         there exists a set of $k$ matchings that cover more nodes than $F$, a
         contradiction.

	\medskip\noindent
	\noindent{\bf Claim 4:} {$N(S)=A'.$}
	
	\smallskip

        Each vertex of $A'$ was marked from a vertex of $S$ and hence
        $A'\subseteq N(S)$. Moreover, there is no edge from any $s\in S$
        to a node  $a\in A\setminus A'$ since the existence of such an edge in
        $E\setminus F$ would prove $a\in A'$ (and by the definition of $B$
        if $s$ is incident to an edge in $F$, then there is only one such
        edge and it was used as the last edge of the alternating path reaching
        $s$, i.e., it comes from $A'$). The claim follows.

	\smallskip
	By Claim~2,  Claim~3 and Claim~4,  $|S\setminus B'| = k\,|A'|$, and since  $B'\ne\emptyset$, we have $|S|> k\,|N(S)|$.
\end{proof}

At first sight, it is surprising that the condition of Theorem~\ref{thm:main} is the same as that of a Theorem of Berge and Las Vergnas \cite{BergeLasvergnas}, which concerns seemingly different combinatorial objects. The proof also revealed connections to Lov\'asz and Plummer's Exercise \cite[Exercise 10.2.28]{matching_theory} and in turn to Heinrich, Hell, Kirkpatrick, Liu \cite{Hell}. We will clarify these connections in Section \ref{sec:conn}. We can readily deduce from Theorem~\ref{thm:main} the main result claimed in the introduction and a  pleasant minimax formula for the maximum number of nodes covered by $k$ matchings.

\maincorollary*

\begin{corollary}
  Given $G$ and $k\ge 2$, the maximum number of nodes that can be covered by $k$ matchings is $|V|-\max\{|S|-k\,|N(S)| \;:\; S$ is a stable set$\}$.
\end{corollary}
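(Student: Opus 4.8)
The plan is to observe that the proof of Theorem~\ref{thm:main} already contains everything needed: run the same argument on a \emph{maximum} union of $k$ matchings and it simultaneously yields this sharp minimax equality. Only the easy ``$\le$'' inequality needs a separate (essentially one-line) argument, namely the counting behind Lemma~\ref{lem:necessary}. So I expect no genuine obstacle; the only things to be careful about are bookkeeping (including $S=\emptyset$ so the formula is literally correct when a matching-$k$-cover exists) and the trivial reduction from the connected case to the general one.

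\emph{Upper bound.} Fix any stable set $S$. As in Lemma~\ref{lem:necessary}, every edge incident to $S$ has its other endpoint in $N(S)$, and a matching meets each vertex of $N(S)$ at most once, so a single matching covers at most $|N(S)|$ vertices of $S$. Hence $k$ matchings cover at most $k\,|N(S)|$ vertices of $S$ and therefore leave at least $|S|-k\,|N(S)|$ vertices of $V$ uncovered; the number of covered vertices is thus at most $|V|-(|S|-k\,|N(S)|)$. Ranging over all stable sets $S$ — in particular $S=\emptyset$, which shows $\max\{|S|-k\,|N(S)| : S\text{ stable}\}\ge 0$ — gives that the maximum number of nodes coverable by $k$ matchings is at most $|V|-\max\{|S|-k\,|N(S)| : S\text{ stable}\}$.

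\emph{Lower bound.} Choose $F$ to be a union of $k$ matchings covering a maximum number of nodes, inclusionwise minimal among such choices, exactly as in the proof of Theorem~\ref{thm:main}; let $B'$ be the set of vertices not covered by $F$. If $B'=\emptyset$, then the right-hand maximum is $0$ (attained at $S=\emptyset$) and there is nothing to prove. Otherwise $B'\ne\emptyset$, and the proof of Theorem~\ref{thm:main} produces a stable set $S\supseteq B'$ with $N(S)=A'$ (Claim~4), with every vertex of $A'$ incident to exactly $k$ edges of $F$ (Claim~2), and with $S\setminus B'$ equal to the set of $B$-endpoints of the edges of $F$ incident to $A'$; since a $B$-vertex has at most one $F$-edge, distinct such edges have distinct endpoints, so $|S\setminus B'|=k\,|A'|=k\,|N(S)|$. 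As $B'\subseteq S$, this gives $|B'|=|S|-k\,|N(S)|$, which is exactly the number of uncovered vertices. Hence the maximum number of covered vertices is $|V|-|B'|=|V|-(|S|-k\,|N(S)|)\ge |V|-\max\{|S|-k\,|N(S)| : S\text{ stable}\}$. Combined with the upper bound, this is an equality (and, incidentally, the $S$ constructed in the proof attains the maximum).

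Finally, if $G$ is disconnected one applies the connected case to each component and sums: a stable set of $G$ splits into stable sets of the components with disjoint neighbourhoods, so $|S|-k\,|N(S)|$ is additive over components and is maximized componentwise, while a matching-$k$-cover of $G$ is obtained by taking, in each matching, the union of its restrictions to the components; an isolated vertex $v$ is handled by $S=\{v\}$, contributing $1$ to both sides.
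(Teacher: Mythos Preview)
Your proof is correct and follows the same approach as the paper: the upper bound is the counting from Lemma~\ref{lem:necessary}, and the lower bound reads off the equality $|B'|=|S|-k\,|A'|=|S|-k\,|N(S)|$ from the proof of Theorem~\ref{thm:main}. You are simply more explicit about the bookkeeping (the case $B'=\emptyset$, the bijection between $S\setminus B'$ and the $F$-edges at $A'$, and the passage to disconnected graphs), whereas the paper records only the one-line identity.
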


\begin{proof}
  It is obvious that we cannot cover more. In the proof of the theorem above
  the number of uncovered nodes is
  $|B'|=|S|-k\,|A'|=|S|-k\,|N(S)|$.
\end{proof}

Theorem~\ref{thm:main} can actually also be extended to the case where we wish to cover only a subset of the vertices. This is the purpose of the next theorem.

\begin{theorem}\label{thm:mainU}
  Given a graph $G$, an integer $k\ge 2$ and $U\subseteq V,$ there are $k$ matchings whose union covers $U$ if and only if for each stable set $S\subseteq U$, we have $|S|\le k\,|N(S)|$. 
\end{theorem}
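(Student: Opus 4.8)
The plan is to rerun the proof of Theorem~\ref{thm:main} essentially word for word, replacing ``number of covered nodes'' by ``number of covered nodes of $U$'' throughout, and to insert one extra observation guaranteeing that the extremal stable set produced lies inside~$U$. The \emph{only if} direction is the argument of Lemma~\ref{lem:necessary}: for a stable set $S\subseteq U$ each matching matches the nodes of $S$ into distinct nodes of $N(S)$, so $k$ matchings cover at most $k\,|N(S)|$ nodes of $S$; since they cover all of $S$, we get $|S|\le k\,|N(S)|$.

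For the \emph{if} direction, I would let $F\subseteq E$ be a union of $k$ matchings covering a maximum number of nodes \emph{of $U$}, chosen inclusionwise minimal among such edge sets. Claim~1 of the proof of Theorem~\ref{thm:main} is purely about degrees and still holds, so $F$ may be treated interchangeably as $k$ matchings or as an edge set with $d_F\le k$; as before, minimality together with Claim~1 implies that no edge of $F$ joins two nodes of $A:=\{v:d_F(v)\ge 2\}$. Put $B:=V\setminus A$ and let $B'$ be the set of nodes \emph{of $U$} not covered by $F$. Assuming $B'\ne\emptyset$, I define $A'$ (resp.\ $S$) as the set of nodes of $A$ (resp.\ of $B$) reachable from $B'$ by an $F$-alternating path, exactly as in the original proof. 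Each augmenting argument proving Claim~2 ($A'$-nodes meet exactly $k$ edges of $F$), Claim~3 ($S$ is stable and sends no edge into $B$, hence $N(S)\subseteq A$, and here $k\ge 2$ is used), and Claim~4 ($N(S)=A'$) carries over verbatim: each symmetric difference along an alternating path newly covers a node of $B'$, which now \emph{is} a node of $U$, so it strictly increases the number of covered nodes of $U$ while keeping $d_F\le k$ — the desired contradiction. Then, as in the original, Claim~2 and the absence of $F$-edges inside $A$ give $|S\setminus B'|=k\,|A'|=k\,|N(S)|$, so $B'\ne\emptyset$ forces $|S|>k\,|N(S)|$, which contradicts the hypothesis once we know $S\subseteq U$.

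The genuinely new point, and the step I expect to need the most care, is precisely $S\subseteq U$. I would prove it by the same interchange trick: suppose some $s\in S$ lies outside $U$, and take an $F$-alternating path $P$ from a node $b'\in B'$ to $s$. Since $b'\in U$ but $s\notin U$ the path is non-trivial; as $b'$ and $s$ both lie in $B$, the path $P$ has an even number of edges, the last of which lies in $F$. Hence replacing $F$ by its symmetric difference with $E(P)$ newly covers $b'\in U$, keeps every internal node of $P$ covered, and loses only $s$, which does not belong to $U$; moreover all degrees stay $\le k$. So we have obtained an edge set with $d_F\le k$ covering strictly more nodes of $U$, contradicting the choice of $F$. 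Therefore every node reachable from $B'$ by an alternating path ending at a node of $B$ lies in $U$, i.e.\ $S\subseteq U$, and the argument is complete.

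A second, more concise route is to reduce directly to Theorem~\ref{thm:main}. Attach to each $v\in V\setminus U$ a ``spider'' consisting of $k$ private pendant paths $v\,v^{i}\,w^{i}_{v}$, $i=1,\dots,k$ (new nodes $v^i,w^i_v$, new edges $vv^i$ and $v^iw^i_v$). One checks that the matching-$k$-covers of the enlarged graph $G'$ correspond to families of $k$ matchings of $G$ covering $U$ (restrict in one direction; in the other, put $v^iw^i_v$ in matching $i$ and, if $v$ is still uncovered, add one edge $vv^i$ to a matching other than $i$, using $k\ge 2$), and that $G'$ meets the condition of Theorem~\ref{thm:main} exactly when $G$ meets the condition of the present theorem — the forward implication is immediate, and for the converse a short count shows that the $k$ fresh private neighbours acquired by each new node leave enough slack to absorb the spider nodes. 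The obstacle here is the bookkeeping in that last count; the direct adaptation above avoids it and is, I think, the cleaner write-up.
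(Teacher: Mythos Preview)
Your primary approach is correct and matches the paper's own proof essentially verbatim: the paper likewise reruns the proof of Theorem~\ref{thm:main} with $B'\subseteq B\cap U$, and the one new ingredient is exactly what you isolate --- the paper packages it as ``Claim~3$'$: $S$ is a stable set and $S\subseteq U$'', proved by the same interchange argument you give. Your second, reduction-based route via pendant spiders is not in the paper and is a pleasant alternative, though as you note the direct adaptation is cleaner.
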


\begin{proof}
Let $F$ be a union of $k$
matchings that covers a maximum number of nodes of $U$; furthermore, let $F$ be
inclusionwise minimal among these maximum choices.
Let $A$ and $B$ the  sets defined in the proof of Theorem \ref{thm:main} but now let  $B'\subseteq B\cap U$ the nodes of $U$ not covered by  $F$.
We can repeat the previous proof.
However, Claim~3  should be replaced as follows.

	\medskip\noindent
	\noindent{\bf Claim 3'}: {\em $S$ is a stable set and  $S\subseteq U$.}
	
	\smallskip
        First suppose that there is an $s\in S\setminus U$. There is an alternating path from some $b\in B'$ to $s$  by the definition of $S$, after  interchanging along such a path $b\in U$ becomes covered (and $s\not\in U$ becomes uncovered), a contradiction.

        From this point we can follow the original proof.
\end{proof}

\begin{corollary}
  Given graph $G$, integer $k\ge 2$ and $U\subseteq V,$ the maximum number of nodes in $U$ that can be covered by $k$ matchings is $|U|-\max\{|S|-k\,|N(S)| \;:\; S\subseteq U$ is a stable set$\}$.
\end{corollary}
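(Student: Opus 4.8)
The plan is to mirror exactly the corollary that follows Theorem~\ref{thm:main}, but now quantifying over stable sets contained in $U$ and invoking the proof of Theorem~\ref{thm:mainU} in place of that of Theorem~\ref{thm:main}. There are two inequalities to establish: that no $k$ matchings can cover more nodes of $U$ than the claimed value, and that some $k$ matchings attain it.

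First, the upper bound. This is the same counting argument as in Lemma~\ref{lem:necessary}: for any stable set $S\subseteq U$, a single matching has at most $|N(S)|$ edges meeting $S$ (their other endpoints lie in $N(S)$ and are pairwise distinct), so $k$ matchings cover at most $k\,|N(S)|$ of the nodes of $S$; hence at least $|S|-k\,|N(S)|$ nodes of $S\subseteq U$, and therefore at least that many nodes of $U$, remain uncovered. Taking the maximum over $S$ — including $S=\emptyset$, which makes the quantity nonnegative — shows that at most $|U|-\max\{|S|-k\,|N(S)| : S\subseteq U \text{ stable}\}$ nodes of $U$ can be covered.

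For the matching lower bound I would take $F$ to be a union of $k$ matchings covering a maximum number of nodes of $U$, inclusionwise minimal among such, exactly as in the proof of Theorem~\ref{thm:mainU}, and let $B'$ be the nodes of $U$ it leaves uncovered. If $B'=\emptyset$ then $F$ covers all of $U$ and the formula holds, the maximum in question being $0$. If $B'\neq\emptyset$, the proof of Theorem~\ref{thm:mainU} constructs (via Claims~2, 3$'$ and 4) a stable set $S\subseteq U$ with $N(S)=A'$ and $|S\setminus B'|=k\,|A'|$; thus the number of uncovered nodes of $U$ equals $|B'|=|S|-k\,|N(S)|$. Since $F$ covers a maximum number of nodes of $U$, that maximum is $|U|-|B'|=|U|-(|S|-k\,|N(S)|)\ge |U|-\max\{|S|-k\,|N(S)| : S\subseteq U \text{ stable}\}$. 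Combined with the upper bound, this forces equality, and incidentally shows that this particular $S$ attains the maximum.

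There is essentially no hard step: the whole content already sits in the proof of Theorem~\ref{thm:mainU}, which produces a tight stable set contained in $U$. The only points needing a word of care are the degenerate cases — ensuring the maximum is at least $0$ by allowing $S=\emptyset$ (equivalently, noting it can be forced large only when isolated vertices of $G$ lie in $U$, which genuinely cannot be covered), and checking that ``maximum number of nodes of $U$ covered'' is precisely what the extremal $F$ realizes.
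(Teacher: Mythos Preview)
Your proof is correct and follows essentially the same approach as the paper: the paper's own proof is just a two-line sketch (``It is obvious that we cannot cover more. In the proof of the theorem above the number of uncovered $U$-nodes is $|B'|=|S|-k\,|A'|=|S|-k\,|N(S)|$''), and you have spelled out both halves of that sketch in full, including the counting argument for the upper bound and the careful handling of the degenerate $B'=\emptyset$ case.
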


\begin{proof}
  It is obvious that we cannot cover more. In the proof of the theorem above
  the number of uncovered $U$-nodes is $|B'|=|S|-k\,|A'|=|S|-k\,|N(S)|$.
\end{proof}

\bigskip

The proofs of Theorem \ref{thm:main} and Theorem \ref{thm:mainU} can be straightforwardly turned to algorithms. Consider the proof of Theorem  \ref{thm:main} for instance.  If we orient the edges of $F$ from $A$ to $B$ and the edges of $E\setminus F$ from $B$ to $A$, then determining $A'$ and $S$ is an accessibility problem (they are the nodes accessible from $B'$), so they can be found by Breadth First Search in time $O(m)$.  Either we can do the interchange and  increase the number of nodes that are covered or we find a stable set $S$ presenting the obstacle. After increasing the number of covered nodes, we may need to delete some edges of $F$ in order to make it inclusionwise minimal, and we may need to redefine sets $A, B, B'$. These steps can easily be done in time $O(m)$. Decomposing $F$ into $k$ matchings can also be done in time $O(m)$. Therefore the proof readily provides an algorithm which correctly determines either a matching-$k$-cover or an obstacle $S$ in time $O(nm)$. 

Suppose now that the goal is to find the smallest $k\geq 2$ for which a
matching-$k$-cover exists (ignoring the distinction between the cases $k=1$
and $k=2$). Starting with $k=2$ we execute the previous algorithm, and if an
obstacle is found, then we increase $k$ by one and start it again. If we
implement the corresponding algorithm na\"ively the running time is
$O(mn^2)$. However we can use the $k$ matchings from the previous iteration to
warm-start the next one. Either the number of covered nodes  or $k$ increases
in each step, and the complexity of the corresponding algorithm then remains
$O(mn)$, matching the time bound of \cite{erratum}.

There is another, conceptually more complicated algorithm of Hell and Kirkpatrick \cite{HellKirkpatrickAlg} for the related $(1,k)$-factor problem that runs in time $O(m\sqrt{n})$ if $k\ge 2$. We will see in the next section that this algorithm can also be used to decide whether a matching-$k$-cover exists or not. Using binary search for $k$ we need to run it for at most $\log n$ times, consequently the minimum $k$, for which a matching-$k$-cover exists, can be be found in time $O(m\sqrt{n}\log n)$ if the answer is at least two. 

Now suppose that we {\em have to make a distinction} between the cases $k=1$ and $k=2$. If the algorithm finds a matching-$2$-cover, then we need an algorithm for finding a perfect matching (in this case $k=1$ is the right answer) or an obstacle (when $k=2$ is the right answer).
Although this is technically much more difficult, there are several $O(m\sqrt{n}\log n)$ time algorithms for that, see e.g., Section 24.4 in \cite{Schrijver}. Therefore the unconstrained minimum of $k$  can be determined in $O(m\sqrt{n}\log n)$ time.

 \section{Connections}\label{sec:conn}
 
In this section we define two other notions seemingly different from one another and from matching-$k$-covers. We reveal the somewhat surprising but simple, and at the same time, fruitful connection of the three, and also the limits of this connection.

 A {\em $k$-star packing} is a (not necessarily spanning) subgraph whose components are  $k'$-star graphs with 
 $1\le k'\le k$. If a $k$-star packing covers every node it is called {\em perfect}.   A {\em $(1,k)$-factor} $(k\in \mathbb{N})$  is an edge set  where every $v\in V$ has at least $1$ and at most $k$ incident edges.  The {\em cardinality} of a matching-k-cover, a $k$-star packing or a $(1,k)$-factor is the number of edges in the corresponding set.

We will realize now that, as long as we are interested in minimizing the cardinality (or more generally any non-negative cost function on the edge set), the perfect $k$-star packing problem, the $(1,k)$-factor problem, and the matching-$k$-cover problem coincide. In contrast, if we have negative costs on some edges, each problem might behave very differently.

Let us first notice the {\em obvious containments between three corresponding sets}: each perfect $k$-star packing is clearly a matching-$k$-cover,  and each matching-$k$-cover is obviously a $(1,k)$-factor. None of these containments is reversible though: two node-disjoint $k-1$-stars joined by an edge is a matching-$k$-cover on $n=2k$ nodes, but not a $k$-star packing (even though it contains a perfect $k$-star packing); and an odd circuit is a $(1,2)$-factor, but not a matching-$2$-cover (even if it contains one). However, their {\em inclusionwise minimal} elements are the same:

\begin{theorem}\label{thm:equivproblems}
Let $G=(V,E)$ be a graph, $k\in\mathbb{N}$ and $F\subseteq  E$. Then $F$ is an inclusionwise minimal perfect $k$-star packing if and only if it is an inclusionwise minimal matching-$k$-cover, which in turn holds  if and only if it is an inclusionwise minimal $(1,k)$-factor. Consequently, a minimum weight matching-$k$-cover can be found in polynomial time for non-negative weights. 
\end{theorem}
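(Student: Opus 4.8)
The plan is to derive everything from a single structural claim: \emph{every inclusionwise minimal $(1,k)$-factor of $G$ is a perfect $k$-star packing}. Write $\mathcal P\subseteq\mathcal M\subseteq\mathcal F$ for the families of (edge sets of) perfect $k$-star packings, matching-$k$-covers and $(1,k)$-factors respectively; the excerpt already records this chain of containments. Once the claim is available, the equality of the three families of minimal elements follows by a routine poset argument that I would spell out as follows. If $F$ is minimal in $\mathcal F$, the claim puts $F$ into $\mathcal P$, and minimality in the largest family $\mathcal F$ immediately forces minimality in the subfamilies $\mathcal M$ and $\mathcal P$. Conversely, if $F$ were minimal in $\mathcal P$ (or in $\mathcal M$) but not in $\mathcal F$, I would pick $F''$ inclusionwise minimal in $\mathcal F$ with $F''\subsetneq F$; the claim gives $F''\in\mathcal P\subseteq\mathcal M$, and $F''\subsetneq F$ contradicts the assumed minimality of $F$. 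Hence the minimal elements of $\mathcal P$, $\mathcal M$ and $\mathcal F$ coincide as edge sets.

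To establish the claim, let $F$ be an inclusionwise minimal $(1,k)$-factor. First I would observe that no edge of $F$ joins two vertices that both have $F$-degree at least $2$: deleting such an edge keeps every $F$-degree in the interval $[1,k]$, contradicting minimality. Equivalently, in the graph $(V,F)$ the set $D$ of vertices of degree at least $2$ is independent. Next I would show that every connected component $C$ of $(V,F)$ is a star: $|D\cap C|\le 1$, since two distinct vertices of $D\cap C$ would be joined in $C$ by a path whose first internal vertex lies on two edges of $F$ yet is not in $D$ — impossible — or the two vertices would be adjacent, contradicting independence of $D$. A component with $D\cap C=\emptyset$ is a single edge (a $1$-star); a component with $|D\cap C|=1$, say with high-degree vertex $w$, is by connectivity the star centred at $w$, of degree $d_F(w)\le k$. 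Since $F$ is a $(1,k)$-factor it covers $V$ and $(V,F)$ has no isolated vertex, so $(V,F)$ is a vertex-disjoint union of $k'$-stars with $1\le k'\le k$ covering $V$, i.e.\ a perfect $k$-star packing.

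For the weighted consequence I would argue as follows. With non-negative edge weights $w$, any matching-$k$-cover, resp.\ any $(1,k)$-factor, can be shrunk by repeatedly deleting edges whose removal preserves membership, reaching an inclusionwise minimal member of the same family without increasing its weight; here I would note that being a ``union of $k$ matchings'' is automatically inherited by subsets, so only the covering, resp.\ the lower-degree, constraint is active during the shrinking. Combined with $\mathcal M\subseteq\mathcal F$ and the equality $\min\mathcal M=\min\mathcal F$ just proved, this gives $\min\{w(F):F\in\mathcal M\}=\min\{w(F):F\in\mathcal F\}$. The right-hand side is the minimum-weight problem over subgraphs in which every vertex has degree between $1$ and $k$, a degree-constrained subgraph problem solvable in polynomial time for non-negative weights by the classical reduction to weighted (perfect) matching, for which I would cite \cite{Schrijver}; a minimum-weight matching-$k$-cover is then recovered as an inclusionwise minimal member of an optimal $(1,k)$-factor.

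The point requiring care is not the structural claim, which is short, but (i) the bookkeeping of the poset argument that transfers minimality between the three families in both directions, and (ii) the role of non-negativity in the weighted part: the excerpt has already observed that with negative weights the three problems genuinely diverge (an odd circuit is a $(1,2)$-factor but not a matching-$2$-cover), so the shrinking step — exactly where non-negativity is used — cannot be dropped. The whole argument is uniform in $k\in\mathbb N$, including $k=1$, where all three families reduce to the perfect matchings of $G$.
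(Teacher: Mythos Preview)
Your proof is correct and follows essentially the same approach as the paper: reduce to the single claim that every inclusionwise minimal $(1,k)$-factor is a perfect $k$-star packing, prove it via the observation that no edge of a minimal $(1,k)$-factor joins two vertices of $F$-degree at least~$2$, and then invoke known polynomial algorithms for weighted degree-constrained subgraphs. You spell out the poset bookkeeping and the component-is-a-star verification in more detail than the paper does, but the argument is the same.
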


	\begin{proof}  Because  of the aforementioned containments between the three sets, it is sufficient to prove that each inclusionwise minimal $(1,k)$-factor is a  perfect $k$-star packing. This is clear: while there exists an edge between two nodes both incident to more than one edge in a $(1,k)$-factor, one can delete this edge, and we get a $(1,k)$-factor again.  A $(1,k)$-factor  without such an edge is a perfect $k$-star packing. 
		
	   Since the minimum of non-negative weight functions on non-negative
           vectors is always attained on vectors of inclusionwise minimal
           support, it is sufficient to check that one of the three problems
           can be solved in polynomial time for non-negative weights. This is
           well-known for $(1,k)$-factors. Indeed, a $(1,k)$-factor of minimum
           weight can be found in (strongly) polynomial time for any objective
           function for instance by the reductions of Tutte \cite{Tutte54} and
           Edmonds's minimum cost matching algorithm \cite{Edmonds-weighted,
           EJ}, see also \cite[35.2, p.\ 586]{Schrijver} or \cite{Bert}.
\end{proof}

Before concluding erroneously about the total confusability of the three defined notions, let us check now how they behave with respect to objective functions having also negative coordinates. The task of minimizing an arbitrary cost function on a family of sets  includes maximizing the cardinality of the sets in the family.  The complexities of the three corresponding cardinality maximization problems are already quite different:  
	\begin{itemize}
		\item[--] Maximizing the cardinality of a matching-$k$-cover is NP-hard already for $k=2$, since finding a matching-$2$-cover of size $n$ in a cubic graph is equivalent to the existence of two edge-disjoint perfect matchings, which is the three-edge-coloring problem of cubic graphs, a well-known NP-complete problem \cite{Holyer}. 
	        \item[--] Maximizing the cardinality of a $3$-star packing is NP-hard: there exists a $3$-star packing of  cardinality at least $3n/4$ if and only if there exists a ``perfect 3-star packing''. This problem has been proved to be NP-complete by Hell and Kirkpatrick \cite{HellKirkpatrick}.
	\item[--] For any cost function maximizing the cost of a
          $(1,k)$-factor is polynomially solvable, see e.g.,  \cite[35.2, p.\ 586]{Schrijver}.
\end{itemize}

 \section{Variants}\label{sec:variants}

 In this section we state several variants and generalizations of Theorem~\ref{thm:main}.

We first consider the case where we need to cover the nodes of $G$ more than
once. Suppose lower bounds $\ell(v)\in \mathbb{N}$ are given on vertices.  A
matching-$k$-cover is $\ell$-bounded if every vertex $v\in V$ is covered by at
least $\ell(v)$ pairwise edge-disjoint matchings, such that, if
$\mathcal{M}=\cup_{i=1}^k M_i$ denotes the union of the $k$ matchings, then
$d_\mathcal{M}(v)\ge \ell(v)$ for each $v\in V$.

Again, if  $k=\ell(v)$ is allowed, the problem has a particular nature
different from the restricted case where $\ell(v)<k$ for every $v$. While for
$k=\ell=1$ the solution is   polynomially solvable, even if harder than for
$k>\ell=1$, the $k=\ell=2$ case is already NP-hard: defining $\ell(v)=2$ for
all $v\in V$ in a $3$-regular graph, an  $\ell$-bounded matching-$2$-cover exists if and only if the graph is $3$-edge-colorable, which is NP-hard to decide \cite{Holyer}.

However, our results can be generalized without any major change if $\ell(v)<k$
for every $v\in V$, despite an additional difficulty that can be overcome with
invoking another piece of the literature.

Recall the ultimate explanation given by Theorem~\ref{thm:equivproblems} for the tractability of matching-$k$-covers. 

{\em Inclusionwise minimal $(1,k)$-factors are $k$-star packings, and thus matching $k$-covers}. 

The last implication was not explicitly explained, since it is obvious: a $k$-star is $k$-edge-colorable, and so are the edges of a $k$-star packing, and this is why  the set of these edges is a matching-$k$-cover. 

Is the same true for {\em $(\ell,k)$-factors}, that is, subgraphs $F$ with
$\ell(v)\le d_F(v)\le k$ for every $v\in V$?  The answer is yes whenever
$\ell(v)<k$  but it is no more trivial, our proof uses a less known theorem
about edge-coloring:  

\begin{lemma}\label{lem:Fournier}
Let $k>\ell(v)$ for all $v\in V.$ An inclusionwise minimal $(\ell,k)$-factor
is  an $\ell$-bounded matching-$k$-cover.
\end{lemma}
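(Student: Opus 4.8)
The plan is to reduce the statement to an edge-coloring claim about the structure of an inclusionwise minimal $(\ell,k)$-factor, exactly paralleling the proof of Theorem~\ref{thm:equivproblems}, and then to invoke a suitable edge-coloring theorem to finish. First I would analyze what an inclusionwise minimal $(\ell,k)$-factor $F$ looks like: for every edge $uv\in F$, deleting $uv$ must violate the lower bound at $u$ or at $v$, i.e.\ $d_F(u)=\ell(u)$ or $d_F(v)=\ell(v)$. Call a node \emph{tight} if $d_F(v)=\ell(v)$. So minimality says every edge of $F$ has at least one tight endpoint; equivalently, the non-tight nodes form a stable set in $F$. Since $\ell(v)<k$ for all $v$, every tight node has $d_F(v)=\ell(v)\le k-1$, so in fact every node of $F$ has degree at most $k$, and every node lies on an edge meeting a tight node.

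The key step is then the edge-coloring claim: \emph{such an $F$ can be edge-colored with $k$ colors so that every vertex $v$ sees at least $\ell(v)$ distinct colors on its incident edges.} Given this, grouping the edges of color class $i$ gives matchings $M_1,\dots,M_k$ whose union is $F$ and with $d_{\mathcal M}(v)=d_F(v)\ge\ell(v)$; moreover each tight $v$ sees $\ell(v)$ colors among its exactly $\ell(v)$ edges, so those edges lie in $\ell(v)$ distinct matchings, and for non-tight $v$ we need the ``$\ge\ell(v)$ colors'' guarantee directly — this is precisely an $\ell$-bounded matching-$k$-cover. To obtain the coloring I would appeal to the theorem hinted at in the text (Fournier's theorem, or the relevant strengthening of Vizing's theorem / the $(g,f)$-edge-coloring results): a graph with maximum degree $\le k$ whose vertices of degree exactly $k$ induce a forest (here an independent set, since non-tight nodes are independent in $F$ and tight nodes have degree $<k$, so actually \emph{no} two degree-$k$ vertices are adjacent) is $k$-edge-colorable in the ordinary sense. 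Ordinary $k$-edge-colorability already gives each $v$ exactly $d_F(v)\ge\ell(v)$ colors, which suffices, so the role of $\ell(v)<k$ is exactly to force every degree-$k$ vertex to be non-tight and hence independent, putting us in the scope of that theorem.

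The main obstacle I expect is verifying that the hypothesis of the invoked edge-coloring theorem is met and citing the right theorem: a multigraph need not be $\Delta$-edge-colorable, so one genuinely needs the structural fact that the degree-$\Delta$ vertices of $F$ form an independent set (or at least a forest), and one must check this carefully from minimality. Concretely: if $v$ has $d_F(v)=k$, then $v$ is not tight (as $\ell(v)<k$), so by minimality every edge at $v$ has its \emph{other} endpoint tight, hence of degree $\le k-1<k$; therefore no edge joins two degree-$k$ vertices, and Fournier's theorem (or Vizing's theorem for simple graphs, since $G$ is simple so $F$ is simple and $\Delta(F)\le k$ already gives $(\Delta+1)$-colorability, with the independence condition pushing it down to $\Delta$) applies. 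The remaining steps — reassembling the color classes into matchings and checking the degree and covering conditions — are routine, so the proof is short once the coloring input is pinned down.
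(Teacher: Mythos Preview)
Your proposal is correct and follows essentially the same route as the paper: you use minimality of $F$ together with $\ell(v)<k$ to show that no edge of $F$ joins two vertices of degree $k$, and then invoke Fournier's theorem to obtain a proper $k$-edge-coloring, whose color classes are the desired matchings. The paper's proof is just the two-line version of this; your additional remark that a proper edge-coloring automatically gives each $v$ exactly $d_F(v)\ge\ell(v)$ distinct colors is the (trivially satisfied) check that the resulting cover is indeed $\ell$-bounded.
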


\begin{proof}
Let $F$ be an inclusionwise minimal $(\ell,k)$-factor. Since $F$ is inclusionwise minimal, {\em and $k>\ell$}, no edge of 
$F$ connects $u$ and $v$ if $d_F(u)=d_F(v)=k$.  By Fournier's \cite{Fournier} generalization of Vizing's \cite{Vizing64} theorem, in this case $F$ is $k$-edge-colorable.
\end{proof}

It follows that our results can be generalized to  $\ell$-bounded matching-$k$-covers: 

For  {\em non-negative} weights a polynomial algorithm directly follows from   Lemma~\ref{lem:Fournier}, since we know this for  $(\ell,k)$-factors.  For the cardinality case the direct proofs and algorithms of Section~\ref{sec:ex} arrive at the following theorem. By Lemma~\ref{lem:Fournier} this theorem is equivalent to 
the theorem of  Heinrich, Hell, Kirkpatrick and Liu \cite{Hell} stating that the same condition  is necessary and sufficient for the existence of an $(\ell,k)$-factor. 

\begin{theorem}\label{thm:l-bounded} 
	Let $G$ be a graph, $k\ge 2$ and $\ell: V\to \{0,\dots,k-1\}$. There exists an $\ell$-bounded matching-$k$-cover in $G$ if and only if  $\sum_{x\in V-X} \max(\ell(x)-d_{G-X}(x),0)\le k|X|$ for every $X\subseteq V.$
\end{theorem}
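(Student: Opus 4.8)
The plan is to mimic the proof of Theorem~\ref{thm:main}, replacing "covered at least once" by "covered at least $\ell(v)$ times," and replacing the degree bound $d_F(v)\le k$ by $\ell(v)\le d_F(v)\le k$ where possible. The natural measure of progress is the *deficiency* $\sum_{v\in V}\max(\ell(v)-d_F(v),0)$ of a $(\le k)$-bounded subgraph $F$; we want to show that a $(\le k)$-bounded $F$ minimizing this deficiency, chosen inclusionwise minimal among such, has deficiency zero unless an obstacle set $X$ with $\sum_{x\in V\setminus X}\max(\ell(x)-d_{G-X}(x),0)>k|X|$ exists. The necessity direction should be a short counting argument: given an $\ell$-bounded matching-$k$-cover $\mathcal M=\cup M_i$ and any $X\subseteq V$, each vertex $x\notin X$ must receive at least $\max(\ell(x)-d_{G-X}(x),0)$ edges from $\mathcal M$ with an endpoint in $X$, so summing over $x$ gives at most $k|X|$ on the right since each $M_i$ contributes at most $|X|$ such edges.

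For sufficiency I would run an augmenting-path argument. Starting from a deficiency-minimal, inclusionwise-minimal $(\le k)$-bounded $F$, call a vertex *deficient* if $d_F(v)<\ell(v)$ and *saturated* if $d_F(v)=k$. As in the proof of Theorem~\ref{thm:main}, orient $F$-edges and non-$F$-edges oppositely and search from the deficient vertices via alternating paths; an alternating path from a deficient vertex to a vertex of degree $<k$ would let us augment along it, strictly decreasing the deficiency — by Lemma~\ref{lem:Fournier} the resulting subgraph is still decomposable into $k$ matchings so this contradicts minimality. Hence every vertex reachable by an alternating step "along a non-$F$-edge into $A$" must be saturated, which is the analogue of Claim~2. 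One then extracts the set $X$ playing the role of $A'=N(S)$ (the saturated "hub" vertices reached) and argues, as in Claims~3 and~4, that the deficiency concentrated outside $X$ in $G-X$ equals the total deficiency of $F$, while $k|X|$ counts only the edges $F$ is allowed to spend on those hubs; strict inequality follows because $F$ still has positive deficiency.

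The main obstacle, and where the argument genuinely differs from Theorem~\ref{thm:main}, is bookkeeping the two-sided constraint $\ell(v)\le d_F(v)\le k$ simultaneously during augmentation: an alternating path that fixes one deficient vertex must not push another vertex above $k$, and it must not drop an already-satisfied vertex below its lower bound. This is exactly why we need $\ell(v)<k$ everywhere — it guarantees a one-unit slack at every vertex so that interior vertices of an alternating path stay feasible, and it is the hypothesis that makes Lemma~\ref{lem:Fournier} applicable so that "$(\le k)$-bounded and hub-free" really means "$k$-edge-colorable." I would therefore phrase the augmenting step carefully so that interior vertices of the path have their $F$-degree unchanged (one edge removed, one added), only the two endpoints change by one, and check that both endpoints stay in $[\,0,k\,]$ using the slack. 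Once this is set up, Claims~3 and~4 transfer essentially verbatim, and the final inequality $\sum_{x\in V\setminus X}\max(\ell(x)-d_{G-X}(x),0)=\mathrm{def}(F)>0$ together with $\mathrm{def}(F)+k|X|\le(\text{edges of }F\text{ at }X)+\text{slack}$ yields the stated obstacle. Finally I would remark, as the paper already flags, that combining this with Lemma~\ref{lem:Fournier} recovers the Heinrich–Hell–Kirkpatrick–Liu characterization of $(\ell,k)$-factors, so one could alternatively cite \cite{Hell} directly; but the point of Section~\ref{sec:ex}'s method is that the self-contained bipartite-style argument above suffices.
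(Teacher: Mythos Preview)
Your plan is essentially the one the paper itself indicates: it does not write out a proof of Theorem~\ref{thm:l-bounded} but says that ``the direct proofs and algorithms of Section~\ref{sec:ex} arrive at'' it and that, via Lemma~\ref{lem:Fournier}, the statement is equivalent to the Heinrich--Hell--Kirkpatrick--Liu criterion for $(\ell,k)$-factors. Your sketch of the alternating-path argument with deficiency as the progress measure, and your closing remark that one could instead cite \cite{Hell}, match this exactly.

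One point to tighten: Claims~3 and~4 do \emph{not} transfer verbatim. In Theorem~\ref{thm:main} the reached set $S\subseteq B$ is stable and $N(S)=A'$, but once $\ell$ can exceed~$1$ a vertex $x\in S$ may have $F$-edges to other $B$-vertices (inclusionwise minimality only forbids $F$-edges joining two vertices with $d_F>\ell$). What survives is that every \emph{non-$F$} neighbour of $S$ lies in $A'$ and every $F$-neighbour of $A'$ lies in $S$; the remaining $F$-edges from $S$ that stay inside $V\setminus A'$ are exactly what the term $d_{G-X}(x)$ in the condition absorbs. With $X=A'$ one gets, for $x\in S$, $d_{G-X}(x)=d_F(x)-|\{a\in A':xa\in F\}|$, and summing yields
\[
\sum_{x\in S}\bigl(\ell(x)-d_{G-X}(x)\bigr)=\mathrm{def}(F)+k|X|,
\]
so the obstacle inequality is $\mathrm{def}(F)+k|X|\le \sum_{x\in V\setminus X}\max(\ell(x)-d_{G-X}(x),0)$, not ``deficiency equals the left side'' as your sketch says. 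Once this bookkeeping is corrected the argument goes through; your use of Lemma~\ref{lem:Fournier} is legitimate because an inclusionwise minimal $(\le k)$-bounded $F$ with $\ell<k$ never has an $F$-edge between two degree-$k$ vertices, so Fournier applies even before deficiency reaches zero.
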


\bigskip
Let us generalize now the problem to an arbitrary family $\cal H$ of subgraphs  allowed to be used for covering the vertices of $G$.

 Let $\cal H$ be a family of subgraphs of $G$. An $\cal H$-$k$-cover consists of $k$ elements of $\cal H$ that altogether cover every node of $G$.
If $\cal H$ is the family of all matchings of $G$, then we get back  matching-$k$-covers. What happens if we allow some richer family that can be used in the cover? We are going to see a few examples with interesting answers closely related to matching-$k$-covers. 

First we examine the case where $\calH$ is the family of $2$-matchings. 
Interestingly enough, we get the same answer as for matching-$k$-covers, but $k=1$ is not an exception anymore.

\begin{theorem}\label{thm:2m-k-cover} 
Let $G=(V,E)$ be a graph, and $k\ge 1$ be an integer. There exists a $2$-matching-$k$-cover in $G$ if and only if $|S|\leq k\,|N(S)|$ for each stable set $S$ of $G$.
\end{theorem}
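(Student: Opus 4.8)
The plan is to treat the two implications separately, pushing essentially all of the work onto the ``only if'' direction and obtaining the ``if'' direction for free from results already in hand.

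For the ``if'' direction, assume that $|S|\le k\,|N(S)|$ for every stable set $S$. If $k=1$, this is literally the hypothesis of Tutte's Theorem~\ref{thm:2m}, so $G$ has a perfect $2$-matching, which is a $2$-matching-$1$-cover. If $k\ge 2$, first observe that the condition passes to connected components: a stable set of $G$ is a disjoint union of stable sets of its components and $N(\cdot)$ distributes over this union, so a violating set would already be a violating set inside a single component. Hence it suffices to build a $2$-matching-$k$-cover componentwise; in particular there is no isolated vertex, so every component has at least two vertices, Theorem~\ref{thm:main} applies, and it yields a matching-$k$-cover of that component. Since every matching is a $2$-matching, taking the union of these covers over the components gives the desired $2$-matching-$k$-cover of $G$.

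For the ``only if'' direction, let $M_1,\dots,M_k$ be $2$-matchings whose union covers $V$, and let $S$ be a stable set; write $V(M_i)$ for the set of vertices covered by $M_i$. The key point is to exploit the cyclic structure of a $2$-matching, not just a degree bound. Given a $2$-matching $M$, define $\sigma_M\colon V(M)\to V(M)$ by letting $\sigma_M(v)$ be the successor of $v$ along the circuit of $M$ containing it (with respect to a fixed cyclic orientation of each circuit), or the unique partner of $v$ if $v$ lies in an edge component. Since the edges and circuits of $M$ partition $V(M)$, the map $\sigma_M$ is a bijection; it has no fixed point (an edge component is a transposition and, $G$ being simple, every circuit has length at least three); and $\sigma_M(v)$ is always a neighbour of $v$ in $G$. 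As $S$ is stable, $\sigma_M$ maps $S\cap V(M)$ injectively into $N(S)\cap V(M)$, whence $|S\cap V(M)|\le|N(S)\cap V(M)|\le|N(S)|$. Applying this to each $M_i$ and summing, and using $S\subseteq V=\bigcup_{i=1}^k V(M_i)$, we get $|S|\le\sum_{i=1}^{k}|S\cap V(M_i)|\le k\,|N(S)|$.

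I expect the only genuine obstacle to be precisely this last sharpening. A $2$-matching-$k$-cover is a priori merely a $(1,2k)$-factor, so counting the edges between $S$ and $N(S)$ directly — each vertex of $N(S)$ may absorb up to two edges from each $M_i$ — yields only the weaker bound $|S|\le 2k\,|N(S)|$ (and in particular misses Tutte's condition in the case $k=1$). It is exactly the cyclic structure of $2$-matchings, encoded by the bijection $\sigma_M$, that replaces ``two edges per vertex'' by ``one successor per vertex'' and recovers the tight factor $k$. Everything else is bookkeeping together with the invocations of Theorems~\ref{thm:main} and~\ref{thm:2m}.
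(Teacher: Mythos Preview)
Your proof is correct. The ``if'' direction matches the paper's: both invoke Theorem~\ref{thm:2m} for $k=1$ and Theorem~\ref{thm:main} for $k\ge 2$ (the paper does not spell out the reduction to connected components, but it is implicit).

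The ``only if'' direction is where you diverge. The paper argues structurally: for $k\ge 2$ it shows that an edge-minimal $2$-matching-$k$-cover is already a matching-$k$-cover (by eliminating even circuits, then odd circuits that share a vertex with another $H_j$, then isolated odd circuits by splitting them across two colour classes), and then appeals to the necessity half of Theorem~\ref{thm:main}; for $k=1$ it simply cites Tutte. Your successor-bijection argument is different in kind: it exploits the cyclic structure of each $2$-matching directly, without ever converting the cover into a matching-$k$-cover, and it handles all $k\ge 1$ uniformly. What the paper's route buys is the auxiliary structural fact that edge-minimal $2$-matching-$k$-covers and matching-$k$-covers coincide for $k\ge 2$, in the spirit of Theorem~\ref{thm:equivproblems}; what your route buys is a self-contained necessity proof that does not lean on Theorem~\ref{thm:main} or on Tutte, and which in fact reproves the easy direction of Theorem~\ref{thm:2m} along the way.
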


\begin{proof} 
  For $k=1$ this is Theorem \ref{thm:2m}.
  
  For $k\ge 2$ it is enough to prove that an edge-minimal $2$-matching-$k$-cover is also a matching-$k$-cover. 

Let $H_1,\ldots,H_k\in\calH$ be the $2$-matchings in an edge-minimal cover. First observe that  $C$ has no even circuit component: such a component could be replaced by any  of its perfect matchings, replacing the edge-set by a proper subset, a contradiction. 

Suppose now that $C$ is an odd circuit component of some $H_i$. If $C$ has a node $v$ covered by some $H_j$, $(j\ne i)$,  then replace $C$ in $H_i$ by a perfect matching of $C-v$: the set of covered nodes does not change, but the set of edges is replaced by a proper subset, a contradiction again. 
  
Finally, if  $C$ is an odd circuit node-disjoint from every other $H_j$, then we express it as the union of two (not node disjoint) matchings $M_1, M_2$ covering all nodes of $C$, replace $C$ by $M_1$ in $H_i$ and add $M_2$ to any  $H_j \; (i\ne j)$. This is possible, since $k\ge 2$. 
\end{proof}

We can also handle the case of $2$-star-packings.

\begin{lemma}\label{lem:2st}
	Let $k\in\mathbb{N}$, and  $\calH$ be the set  of $2$-star-packings. An edge-set is an inclusionwise minimal $\calH$-$k$-cover if and only if it is a matching-$2k$-cover.
\end{lemma}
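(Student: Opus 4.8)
The plan is to prove the two inclusions "minimal $\calH$-$k$-cover $\Rightarrow$ matching-$2k$-cover" and "matching-$2k$-cover contains an $\calH$-$k$-cover, with the minimal ones coinciding" separately, mirroring the structure of Theorem \ref{thm:equivproblems} and Theorem \ref{thm:2m-k-cover}. A $2$-star-packing is a subgraph whose components are stars with at most two leaves, i.e.\ single edges or paths on three nodes; such a subgraph has maximum degree $\le 2$ on the leaf side but the center may have degree $2$, so every $2$-star-packing is the union of $2$ matchings. Hence a family of $k$ $2$-star-packings is always a matching-$2k$-cover, giving the "only if"-type containment at the level of covers (not yet minimality).

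For the substantive direction I would start from an inclusionwise minimal $\calH$-$k$-cover $F = H_1\cup\dots\cup H_k$ and argue it is already a matching-$2k$-cover by showing $d_F(v)\le 2k$ and then invoking an edge-colouring argument. Minimality should force each $H_i$ to be an inclusionwise minimal $2$-star-packing covering the same nodes, and, more importantly, should force that no edge $uv\in F$ has both endpoints of "high" degree: if $d_F(u)=d_F(v)=2k$, then $uv$ is a leaf-edge in every $H_i$ containing it as well as in at least one other, and one can delete it from one such $H_i$ (shifting, if necessary, inside that star) while still covering $u$ and $v$ from the other $H_j$'s — contradicting minimality. With that structural fact in hand, $F$ is a graph of maximum degree $2k$ in which adjacent vertices are not both of degree $2k$, so by Fournier's theorem (already invoked in Lemma \ref{lem:Fournier}) $F$ is $2k$-edge-colourable, i.e.\ a matching-$2k$-cover; since $F$ was a minimal $\calH$-$k$-cover and every matching-$2k$-cover is trivially an $\calH$-$k$-cover (matchings are $2$-star-packings, group them in pairs), $F$ is also an inclusionwise minimal matching-$2k$-cover. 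The converse — that an inclusionwise minimal matching-$2k$-cover is a minimal $\calH$-$k$-cover — then follows because it is an $\calH$-$k$-cover that cannot be shrunk without also shrinking it as a matching-$2k$-cover.

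The main obstacle I anticipate is the bookkeeping in the minimality-forces-no-bad-edge step: one must be careful that deleting a leaf-edge from $H_i$ leaves a legitimate $2$-star-packing (it always does, since removing a leaf from a $2$-star yields a $1$-star or an isolated edge) and that the node it covered in $H_i$ is genuinely recovered by some $H_j$, which is exactly where the hypothesis $d_F(v)=2k>k$ (so $v$ lies in more than one $H_i$) is used. A secondary point is to confirm the degree bound $d_F(v)\le 2k$ holds for a minimal cover at all — each $H_i$ contributes at most $2$ to $d_F(v)$, so this is immediate — and to phrase the Fournier application cleanly, noting that "adjacent vertices not both of maximum degree" is precisely the hypothesis under which $\Delta$ colours suffice.
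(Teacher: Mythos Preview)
Your proposal has a genuine gap. You correctly observe in the first paragraph that every $\calH$-$k$-cover is a matching-$2k$-cover, since each $2$-star-packing splits into two matchings. Given this, the Fournier argument of your second paragraph is entirely redundant: you are re-proving, for \emph{minimal} $\calH$-$k$-covers, a fact you have just established for \emph{all} $\calH$-$k$-covers without any minimality hypothesis.

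The substantive direction is the opposite one: that every inclusionwise minimal matching-$2k$-cover is an $\calH$-$k$-cover. You dispose of this with the parenthetical ``every matching-$2k$-cover is trivially an $\calH$-$k$-cover (matchings are $2$-star-packings, group them in pairs)'', and this is where the argument breaks. The union $M_{2i-1}\cup M_{2i}$ of two matchings is a subgraph of maximum degree~$2$, i.e.\ a disjoint union of paths and even circuits; it is \emph{not} in general a $2$-star-packing, whose components must be stars with at most two leaves. Concretely, the path $v_1v_2v_3v_4$ is the edge-set of a matching-$2$-cover (take $M_1=\{v_1v_2,v_3v_4\}$ and $M_2=\{v_2v_3\}$), but it is not a single $2$-star-packing, hence not an $\calH$-$1$-cover. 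So the containment you call ``trivial'' is simply false for non-minimal covers, and you give no argument for the minimal case.

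The paper supplies exactly the missing step, and does so in one line via Theorem~\ref{thm:equivproblems}: an inclusionwise minimal matching-$2k$-cover is a perfect $2k$-star packing. Each component is a star with at most $2k$ edges and can be split into at most $k$ stars with at most two edges each; doing this componentwise writes the edge-set as a union of $k$ $2$-star-packings, hence an $\calH$-$k$-cover. Combined with the easy containment (your first paragraph), the minimal elements of the two families coincide. No appeal to Fournier is needed.
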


\begin{proof} 
By Theorem \ref{thm:equivproblems} the components of an edge-minimal  matching-$2k$-cover are $k'$-stars with $k'\le 2k$, and can be thus covered by at most $2k$ matchings. The other direction is obvious.
\end{proof}

Again, polynomial solvability of the minimization of non-negative weight functions and the solution of problems solvable for matching-$k$-covers can be adapted to $\calH$-$k$-covers. For instance: 
  
\begin{theorem}\label{thm:2star-k-cover} 
Let $G=(V,E)$ be a graph, $k\ge 1$ be an integer, and
  $\calH$ be the set of  $2$-star-packings. There exists an $\calH$-$k$-cover in $G$ if and only if $|S|\leq 2k\,|N(S)|$  for each stable set $S$ of $G$. 
\end{theorem}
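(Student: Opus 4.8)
The plan is to deduce this from Theorem~\ref{thm:main} applied to the integer $2k$, which is legitimate because $2k\ge 2$ for every $k\ge 1$; consequently the $k=1$ exception of plain matching-covers never arises here. The bridge is Lemma~\ref{lem:2st}: it tells us that an edge set is an inclusionwise minimal $\calH$-$k$-cover precisely when it is an inclusionwise minimal matching-$2k$-cover. From this I would immediately get that $G$ admits an $\calH$-$k$-cover if and only if $G$ admits a matching-$2k$-cover: shrinking any cover of either type to an inclusionwise minimal one, Lemma~\ref{lem:2st} turns it into a cover of the other type. For the reader I would also recall the easy half of Lemma~\ref{lem:2st}: by Theorem~\ref{thm:equivproblems} an edge-minimal matching-$2k$-cover is a perfect $2k$-star packing whose components are $k'$-stars with $k'\le 2k$, and pairing up the edges of each such star into $\lceil k'/2\rceil\le k$ paths of length at most two exhibits it as the union of $k$ $2$-star packings.

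With this equivalence in hand, I would finish by quoting Theorem~\ref{thm:main} with $2k$ in the role of $k$: a matching-$2k$-cover exists if and only if $|S|\le 2k\,|N(S)|$ for every stable set $S$. Chaining the two equivalences yields the claim. For completeness I would note that the ``only if'' direction is also transparent directly, as a $2k$-analogue of Lemma~\ref{lem:necessary}: the union of $k$ $2$-star packings has maximum degree at most $2k$, so at most $2k\,|N(S)|$ of its edges are incident to a stable set $S$ (all of them running from $S$ to $N(S)$), while covering all of $S$ needs at least $|S|$ such edges.

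The one genuine point of hygiene is that Theorem~\ref{thm:main} is stated for connected graphs, whereas here $G$ is arbitrary; I expect this bookkeeping, rather than any real difficulty, to be the ``hard part''. I would dispatch it in the usual way: applying the hypothesis to singletons rules out isolated vertices, so every component of $G$ has at least two nodes; the inequality $|S|\le 2k\,|N(S)|$ holds in $G$ exactly when it holds within each component (the neighbourhood of a subset of a component stays inside that component); and an $\calH$-$k$-cover of $G$ is simply the union of $\calH$-$k$-covers of its components. Hence one may assume $G$ connected and apply the argument above.
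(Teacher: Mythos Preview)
Your proof is correct and follows exactly the route the paper intends: reduce $\calH$-$k$-covers to matching-$2k$-covers via Lemma~\ref{lem:2st} (noting $2k\ge 2$), and then invoke Theorem~\ref{thm:main}. Your additional care about connectedness and your explicit pairing of the edges of each $k'$-star into at most $k$ paths of length $\le 2$ actually fill in details that the paper leaves implicit.
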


A {\em path-packing} is a subgraph whose components are paths. Again, inclusionwise minimal path-packing-k-covers are exactly the inclusionwise minimal $2$-star-$k$- covers 

\begin{corollary}
Let $G=(V,E)$ be a graph, $k\ge 1$ be an integer, and
  $\calH$ be the set of path-packings. There exists an $\calH$-$k$-cover in $G$ if and only if $|S|\leq 2k\,|N(S)|$ for each stable set $S$ of $G$.
\end{corollary}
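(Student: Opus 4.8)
The plan is to reduce the corollary to Theorem~\ref{thm:2star-k-cover} by establishing the claim announced in the sentence preceding it: the inclusionwise minimal path-packing-$k$-covers are exactly the inclusionwise minimal $2$-star-packing-$k$-covers. I would work with three nested families of edge sets — the $2$-star-packing-$k$-covers, the path-packing-$k$-covers, and the $(1,2k)$-factors — and first record the two easy containments. A $1$-star is a single edge and a $2$-star is a path on three vertices, so every $2$-star-packing is a path-packing, and hence every $2$-star-packing-$k$-cover is a path-packing-$k$-cover. On the other hand a path-packing contributes degree at most two at each vertex, so the union of $k$ path-packings that covers $V$ has all degrees between $1$ and $2k$; that is, every path-packing-$k$-cover is a $(1,2k)$-factor.

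The key step is the observation, supplied by Theorem~\ref{thm:equivproblems}, that an inclusionwise minimal $(1,2k)$-factor is a perfect $2k$-star packing. The edges of a $k'$-star with $k'\le 2k$ can be split into $\lceil k'/2\rceil\le k$ groups, each forming a star with at most two leaves; doing this simultaneously in every component shows that a perfect $2k$-star packing is the union of $k$ $2$-star-packings, hence a $2$-star-packing-$k$-cover. Combining this with the containments: given any path-packing-$k$-cover $F$, it is a $(1,2k)$-factor, an inclusionwise minimal $(1,2k)$-factor $F_0\subseteq F$ is a $2$-star-packing-$k$-cover, and Theorem~\ref{thm:2star-k-cover} applied to $F_0$ yields $|S|\le 2k\,|N(S)|$ for every stable set $S$ — this is the ``only if'' direction. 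The same argument shows that an inclusionwise minimal path-packing-$k$-cover must already be an inclusionwise minimal $(1,2k)$-factor (otherwise a proper sub-$(1,2k)$-factor would contain a $2$-star-packing-$k$-cover, hence a smaller path-packing-$k$-cover), hence a $2$-star-packing-$k$-cover, and minimal as such; conversely, by Lemma~\ref{lem:2st} together with Theorem~\ref{thm:equivproblems} an inclusionwise minimal $2$-star-packing-$k$-cover is an inclusionwise minimal $(1,2k)$-factor, so, the path-packing-$k$-covers being sandwiched between the two families, the two classes of minimal elements coincide.

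For the ``if'' direction nothing new is needed: if $|S|\le 2k\,|N(S)|$ for every stable set $S$, then Theorem~\ref{thm:2star-k-cover} already supplies a $2$-star-packing-$k$-cover, which is a path-packing-$k$-cover by the containment above.

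I do not expect a genuine obstacle here. The one point that deserves care — and the reason the argument is routed through $(1,2k)$-factors rather than through naively deleting an ``overcovered'' edge — is that a path-packing must be acyclic: an arbitrary subgraph of maximum degree $2k$ need not decompose into $k$ path-packings, since its degree-$\le 2$ pieces could contain cycles, whereas a perfect $2k$-star packing is a forest, so splitting its stars automatically produces genuine path-packings. Everything else is the now-familiar bookkeeping with minimal elements of nested families, together with the cited Theorems~\ref{thm:equivproblems} and~\ref{thm:2star-k-cover}.
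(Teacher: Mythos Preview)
Your argument is correct, but it takes a longer detour than necessary. The paper's proof is two sentences: every $2$-star-packing is a path-packing (so the ``if'' direction follows from Theorem~\ref{thm:2star-k-cover}), and conversely, from any single path-packing one can delete edges so that each path breaks into paths of length~$1$ or~$2$ covering the same vertices --- i.e., each $H_i$ individually becomes a $2$-star-packing, and doing this for $i=1,\dots,k$ yields a $2$-star-packing-$k$-cover, whence the ``only if'' direction. You instead pass to the union $F$, view it as a $(1,2k)$-factor, drop to a minimal $(1,2k)$-factor (a $2k$-star packing via Theorem~\ref{thm:equivproblems}), and re-split the stars into $k$ $2$-star-packings. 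This works, and it does establish the stronger claim about coincidence of minimal elements in the three nested families, but the concern you raise in your last paragraph --- that a max-degree-$2$ piece might contain a cycle --- simply does not arise in the paper's argument, since a path-packing is already acyclic by definition and one is only deleting edges from paths, not from an arbitrary degree-$\le 2k$ subgraph.
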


\begin{proof}
A $2$-star-packing is a path-packing. Given a path-packing, some edges of the paths can be deleted resulting in a $2$-star-packing.
\end{proof}

	The reader is invited to state any combination of the many possible variants of the theorems and algorithms  including $\ell$-bounds, node-weights etc.,  according to tastes and needs.

 \section{From the manufacturing of integrated circuits to  $k$-matching-cover  }\label{sec:app}

Our interest in the matching-$k$-cover problem stemmed from the study of an application in the manufacturing of integrated circuits. More specifically, we were interested in the optimization of the manufacturing of one particular type of components called vias (basically vertical connectors between different layers of an integrated circuit). 

The main process used in the manufacturing of vias is called (photo)lithography. Lithography is a technology that uses light to transfer an arrangement of (2-dimensional) geometric features from a mask to a light-sensitive chemical photoresist on the silicon wafer. The nature of the process, and diffraction in particular, imposes a minimum distance $L$ between two features in order to obtain a proper transfer. Trying to transfer simultaneously two vias (typically equal size disk features when viewed from the top) that are at a distance below $L$ would result in the production of a `dumbbell-shaped' object on the wafer.

One way of transfering an arrangement (a.k.a. layout) of vias where some are at a distance below $L$ is to decompose the layout into several subsets of vias that respect the minimum distance restriction and to transfer the corresponding sub-layout iteratively. This procedure is known as multiple patterning. Because the masks are expensive, manufacturers are interested in minimizing the number of multiple patterning steps. Decomposing the layout into a minimum number of feasible sub-layouts readily translates into a graph coloring problem: the vias are the nodes of the graph and there is an edge between two nodes if the corresponding vias are below the minimal distance.

Now a new technology, called Directed Self Assembly (DSA), offers an alternative to pure multiple patterning. The technology allows to correct `dumbbell-shaped' objects after their creation at a relatively low cost, if they respect certain properties. Actually the technology allows to correct even longer `pea pod-shaped' objects resulting from the improper transfer of a sequence of (aligned) vias that are below the minimum distance. The precise shapes of the objects that can later be corrected through DSA technology is not yet explicitly known. However there are techniques to check whether a pattern is feasible or not. 

Manufacturers want to exploit DSA technology in order to minimize the number of multiple patterning steps. Suppose that we are given a layout of vias and a complete list of the different grouping of vias that we can later correct through DSA technology. We can create a graph $G$ whose nodes are all the feasible grouping (including singletons) and where two nodes are adjacent if the (minimum) distance between the corresponding groups is below $L$ (in particular two groups containing a same via will always be in conflict). We now want to find a set of nodes $U$ in $G$ such that (i) the unions of the vias in the groups corresponding to the set $U$ covers all original vias and (ii) the chromatic number of $G[U]$ is minimum among all such $U$. Observe that the set of nodes containing a given via form a (non necessarily maximal) clique. Let us call $\cal K$ the set of all such cliques. The problem is equivalent to determine the minimum number of colors needed to partially color the nodes of $G$ so that each clique of $\cal K$ is hit by (at least) one of the colors.

We might call the underlying graph coloring problem the {\em clique-hitting coloring problem}. The problem is obviously hard as it contains (proper) graph coloring as a special case (for a given graph, choose each single node as one of the cliques of $\cal K$). Now the matching-$k$-cover appears to be a special case of this problem where the graph $G$ is the line graph of a graph $H$ and the set of cliques $\cal K$ are the cliques stemming from the $|V(H)|$ (maximal) stars in $H$ centered in each vertex (that is, ${\cal K}=\{\{\delta(v)\}$, for all $v\in V(H)\}$.

\bigskip \noindent{\bf Acknowledgment}: We thank Mentor Graphics for their role beyond the finances of a doctoral work: the problem analyzed here originates from their practical initiatives, related to new technology. 

Thanks are due to Fr\'ed\'eric Maffray for discovering that   matching-$k$-covers have  been studied in \cite{matching_cover_china}.  Our  main results seem to be complementary to theirs. Trying to understand the connections we went into the flaw of  \cite{matching_cover_china} but were reassured by the soon appearing \cite{erratum}. Our main results compared to these are the theorems that provide an appropriate framework for simple proofs and algorithm, moreover to some generalizations.

\small

\end{document}